\newenvironment{proof}{\medskip                    %% Proof
\noindent{\scshape Proof:}}{\quad $\square$
\medskip}  %%
\newtheorem{theorem}{Theorem}[section]
\newtheorem{lemma}[theorem]{Lemma}
\newtheorem{corollary}[theorem]{Corollary}
\newtheorem{example}[theorem]{Example}
\newtheorem{remark}[theorem]{Remark}
\newtheorem{algorithm}[theorem]{Algorithm}
\newcommand{\RR}{\mathbb{R}}                 % reelle Zahlen
\def\cT{{\mathcal T}}
\def\indeg{\operatorname{indeg}}
\def\inset{\operatorname{in}}
\def\semr{S}
\def\toT{\overset{T}{\to}}
\def\msn{\smallskip\noindent}
\def\sssn{\noindent}
\begin{document}

\title{The Markov Chain Tree Theorem and the State 
Reduction Algorithm in Commutative Semirings}

\author[rvt1]{Buket Benek Gursoy}%\fnref{fn1}}
\ead{buket.gursoy@ichec.ie}

\author[rvt2]{Steve Kirkland\fnref{fnsk}}
\ead{steven.kirkland@umanitoba.ca}

\author[rvt3]{Oliver Mason\fnref{fnom}}%\fnref{fn1}}
\ead{oliver.mason@nuim.ie}

\author[rvt4]{Serge{\u\i} Sergeev\fnref{fns}\corref{cor}}
\ead{sergiej@gmail.com}

\address[rvt1]{Irish Centre for High-End Computing
(ICHEC), Grand Canal Quay, Dublin 2, Ireland }
\address[rvt2]{Department of Mathematics, University of Manitoba, Winnipeg, R3T 2N2 
Canada}
\address[rvt3]{Hamilton Institute, National University of Ireland 
Maynooth, Maynooth, Co. Kildare, Ireland}
\address[rvt4]{University of Birmingham, School of Mathematics, 
Edgbaston B15 2TT, UK}

\cortext[cor]{Corresponding author. Email: sergiej@gmail.com}

\fntext[fnom]{Supported by the Irish 
Higher Educational Authority (HEA) PRTLI Network Mathematics Grant.}
\fntext[fnsk]{Research supported in part by the University of 
Manitoba under grant 315729-352500-2000.}
\fntext[fns]{Supported by EPSRC grant EP/J00829X/1 and RFBR grant 12-01-00886}

%%%%%%%%%%%%%%%%%%%%%%%%%%%%%%%%%%%%%%%%%%%%%%%%%%%%%%%%%%%%%%%%%%%%%%%%
%%%                         ABSTRACT
%%%%%%%%%%%%%%%%%%%%%%%%%%%%%%%%%%%%%%%%%%%%%%%%%%%%%%%%%%%%%%%%%%%%%%%%
\begin{abstract}
We extend the Markov chain tree theorem to general commutative semirings, and 
we generalize the state reduction algorithm to general commutative semifields. 
This leads to
a new universal algorithm, whose prototype is the state reduction algorithm which 
computes the Markov chain tree vector of a stochastic matrix.  
\end{abstract}

%%%%%%%%%%%%%%%%%%%%%%%%%%%%%%%%%%%%%%%%%%%%%%%%%%%%%%%%%%%%%%%%%%%%%%%%
%%%                         KEYWORDS
%%%%%%%%%%%%%%%%%%%%%%%%%%%%%%%%%%%%%%%%%%%%%%%%%%%%%%%%%%%%%%%%%%%%%%%%
\begin{keyword}
 Markov chain, universal algorithm, commutative semiring, 
state reduction
\vskip0.1cm {\it{AMS Classification:}} 
15A80,  %Max-plus and related algebras
%05C50  Graphs and linear algebra (matrices, eigenvalues, etc.)
15A18, % Eigenvectors and eigenvalues
60J10,  % Markov chains (discrete-time Markov processes on discrete state spaces)
05C05,  %Trees
05C85.  %Graph algorithms
\end{keyword}

\maketitle

\section{Introduction}
\label{s:intro}

The Markov Chain Tree Theorem %\footnote{called the Fre\u{\i}dlin-Wentzell 
%theorem in~\cite{Son99}
states %in graph-theoretic terms 
that each (row) stochastic matrix $A$ has a left eigenvector $x$, 
such that each entry $x_i$ is the sum of the weights of all spanning 
trees rooted at $i$ and with edges directed towards $i$. 
This vector has all components positive if $A$ is irreducible, 
and it can be $0$ in the general case.
It can be computed by means of the State Reduction Algorithm 
formulated independently by Sheskin~\cite{She}
and Grassman, Taksar and Heyman~\cite{GTH}; see also Sonin~\cite{Son99} 
for more information on this.

In the present paper, our main goal is to generalize this algorithm to matrices over commutative semifields, inspired by the
ideas of Litvinov et al.~\cite{LM98,LMa,LRSS}. To this end, let us mention first the 
tropical mathematics~\cite{BCOQ92,But:10,HOW}, which is 
a relatively new branch of mathematics developed 
over idempotent semirings, of which 
the tropical semifield, also known as the max algebra, 
is the most useful example. In one of its equivalent 
realizations (see Bapat~\cite{Bap98}), the max algebra is just the set of nonnegative real numbers
equipped with the two operations $a\oplus b=\max(a,b)$ and $a\cdot b=ab$; these operations extend to matrices and vectors in the usual way. Much of the initial development of max algebra was motivated by applications in scheduling and discrete event systems~\cite{BCOQ92,HOW}.  While this original motivation remains, the area
is also a fertile source of problems for specialists in combinatorics and other areas of pure mathematics.
See, in particular,~\cite{LM:05,LS:09} 
 
According to Litvinov and Maslov~\cite{LM98},
tropical mathematics (also called idempotent mathematics 
due to the idempotency law $a\oplus a=a$)
can be developed in parallel with  traditional mathematics, so that many useful 
constructions and results can be translated from 
traditional mathematics to a tropical/idempotent ``shadow'' and
back.  Applying this principle to algorithms 
gives rise to the programme of making some algorithms universal,
so that they work in traditional mathematics, tropical mathematics, and over a wider class of semirings. 

There is a well-known universal algorithm, which
 derives from Gaussian elimination without pivoting. This universal version of 
Gaussian elimination was developed by Backhouse and Carr\'{e}~\cite{BC}, see also Gondran~\cite{Gon} and Rote~\cite{Rot}.
 Based on it, Litvinov et al.~\cite{LM98,LMa,LRSS} formulated a wider concept of a universal algorithm, 
and discovered some new universal versions of Gaussian elimination for Toeplitz matrices and other special kinds of matrices. 
The semiring version of the State Reduction Algorithm found in the present paper can be seen as a new development 
in the framework of those ideas.

The present paper is also a sequel of our earlier work~\cite{BGuKMS}, 
where the Markov Chain Tree Theorem was proved over the
max algebra. To this end, we remark that the max-algebraic 
analogue of probability is known and has been studied, 
e.g., by Puhalskii~\cite{Puh01} as
idempotent probability. Our work is also related to the 
papers of Minoux~\cite{Min1, Min2}. However, the Markov 
Chain Tree Theorem established in the present paper is different from the theorem of~\cite{Min1} which establishes a 
relation between the spanning tree vector and bi-determinants of associated matrices of higher dimension. 
Also, no algorithms for computing the spanning tree vector are offered in~\cite{Min1,Min2}.

Let us mention that the proof of universal Markov Chain Tree theorem given in the present paper 
generalizes a proof that can be found in a technical report of
Fenner and Westerdale~\cite{FW}.
In our development of the universal State Reduction Algorithm we build upon the
above mentioned 
State Reduction Algorithm of~\cite{She,GTH,Son99}. The work of Sonin~\cite{Son99} appears to be particularly useful here,
since it provides most of the necessary elements of the proof.  
We recommend both the works of Fenner-Westerdale~\cite{FW} and
Sonin~\cite{Son99} to the reader as well-written explanations of the Markov Chain Tree theorem and
the State Reduction Algorithm in the setting of classical probability. 
The proofs we give here are predominantly based on combining the 
arguments of these earlier works and verifying that they generalize 
to the abstract setting of commutative semirings and semifields.

When specialized to the max algebra, the universal State Reduction Algorithm 
provides a method for computing 
the maximal weight of a spanning tree in a directed network.
%max-stochastic network. 
Of course, the problems of minimal and maximal spanning trees in graphs, 
particularly undirected graphs, have attracted much attention \cite{Jung}.  
%However, the authors are not aware of any algorithm in the literature 
%specialised to max-stochastic directed networks.  
Recall that in the case of directed graphs, 
the best known algorithm is 
the one suggested by Edmonds~\cite{Edm} and, independently, 
Chu and Liu~\cite{CL}. This algorithm has some similarities with the universal
State Reduction Algorithm (when the latter is specialized to the max algebra), 
but we will not give 
any further details on this.

% does not seem to be in any obvious relation to it, and (when specialized to the
%max algebra) it applies only to 
%max-stochastic matrices. 
 
Let us also mention that the State Reduction algorithm can be seen
as a special case of the stochastic complements technique, see 
Meyer~\cite{Meyer}.

The rest of the paper is organized as follows. 
In Section~\ref{s:mct} we obtain the universal version of
the Markov Chain Tree Theorem. In Section~\ref{s:sr} we 
formulate the universal State Reduction Algorithm and
provide a part of its proof. Section~\ref{s:sonin} is devoted to 
the proof of a particularly technical lemma
(basically following Sonin~\cite{Son99}).

%In keeping with Bapat~\cite{Bap98}, the max algebra consists of the non-negative real numbers 
%equipped with the two operations $a + b = \max(a, b)$ and 
%$a \cdot b = ab$.  These operations extend to 
%nonnegative matrices and vectors in the standard way \cite{Bap98,  BCOQ92, But10, Cun79}. 

\section{Markov Chain Tree Theorem in Semirings}
\label{s:mct}
A semiring $(S, +, \cdot)$ consists of a set $S$ equipped with two (abstract)
binary operations $+$, $\cdot$.  The generalized 
addition, $+$, is  commutative and associative and has an identity element $0$.  
The generalized multiplication $\cdot$ is associative and distributes 
over $+$ on both the left and the right.  There also exists a multiplicative 
identity element $1$ and the additive identity is absorbing in the the sense 
that $a \cdot 0 = 0$ for all $a \in S$.  We shall only be concerned with commutative 
semirings, in which $\cdot$ is also commutative.  Next we list some 
well-known examples of semirings where Theorem \ref{t:mst} %and Algorithm \ref{a:usr} 
is valid.

\begin{example}
\label{ex:na}
Classical nonnegative algebra which consists of the set of all nonnegative real numbers together with the usual addition and multiplication is a commutative (but not idempotent) semiring.
\end{example}
\begin{example}
\label{ex:ma}
What we are referring to as the max algebra 
is often called the max-times algebra to distinguish it 
from other isomorphic realisations.  The max-plus algebra 
(isomorphic to max algebra via the mapping 
$x\rightarrow \exp(x)$) consists of $S=\RR \cup \{-\infty\}$
 with the operations $a+b=\max(a,b)$ and $a\cdot b=a+b$.  The min-plus 
algebra (isomorphic to max plus algebra by the mapping $x\rightarrow -x$) 
consists of $S=\RR \cup \{+\infty\}$ with the operations $a+b=\min(a,b)$ and 
$a\cdot b=a+b$.  All of these realisations are commutative idempotent semirings.
\end{example}
\begin{example}
Let $U$ be a set, and consider a Boolean algebra of subsets of $U$. This is an
idempotent semiring where $a+b=a\cup b$ and $a\cdot b=a\cap b$ for 
any two subsets $a,b\subseteq U$. In the case of finite $U$,  matrix algebra over 
$U$ was considered, e.g., by Kirkland and Pullman~\cite{KP}.
\end{example}
\begin{example}
The max-min algebra consisting 
of $S=\RR \cup \{-\infty\}\cup\{+\infty\}$ equipped with 
$a+b=\max(a, b)$ and $a\cdot b=\min(a, b)$ for all $a, b\in S$ is another commutative idempotent semiring.
\end{example}
\begin{example}
Given a semiring $S$ with idempotent addition ($a+a=a$),  
equipped with the canonical partial order $a \preceq b$ iff $a + b = b$, an 
Interval Semiring $I(S)$ (see \cite{LS00}) can be constructed as follows.  $I(S)$ consists of 
order-intervals $[a_1, a_2]$ (where $a_1 \preceq a_2$) and is equipped with the operations 
$+$ and $\cdot$ defined by 
$[a_1, a_2]+ [b_1, b_2] = [a_1+ b_1, a_2 + b_2], [a_1, a_2] \cdot [b_1, b_2] = 
[a_1\cdot b_1, a_2 \cdot b_2]$. 
\end{example}

We define addition $A+B$ and multiplication $AB$ of matrices over 
$S$ in the standard fashion.  Given a matrix 
$A \in S^{n \times n}$, the weighted directed graph 
$D(A)$ is defined in exactly the same way as for matrices with real entries.  

Let us proceed with some graph-theoretic definitions. %and observations.
By a {\em (spanning) $i$-tree} we mean a (directed) spanning tree rooted at $i$ and directed
towards $i$. A {\em functional graph} $(V, E)$ is a directed graph in which each vertex has exactly one outgoing edge.  
Such graphs are referred to as "sunflower graphs" in \cite{HOW}.  
It is easy to see that a functional graph in general contains several cycles,
which do not intersect each other.  A functional graph having only one cycle that goes through $i$ and is not a loop
(that is, not an edge of the form $(i,i)$) will be called {\em $i$-unicyclic}. 

\if{
\begin{lemma}
\label{l:equiv}
A functional graph contains an $i$-tree if and only if
it has only one cycle, and this cycle contains node $i$.
\end{lemma}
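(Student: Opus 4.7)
The plan is to exploit a structural rigidity shared by functional graphs and $i$-trees: in both, every non-$i$ vertex has a uniquely determined outgoing edge. In an $i$-tree $T$, each $j \neq i$ has exactly one outgoing edge (toward its parent along the unique path to $i$) and $i$ has none, while in a functional graph $G$ every vertex has exactly one outgoing edge. Consequently, if $G$ contains $T$ as a subgraph, the outgoing edge of each $j \neq i$ in $T$ must coincide with that of $j$ in $G$, and $T$ must be obtained from $G$ by deleting the single outgoing edge of $i$. This pins down the candidate $T$ uniquely and reduces the biconditional to analyzing when this deletion yields an $i$-tree.

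For the forward direction, I would suppose $G$ contains an $i$-tree $T$, realize $T$ as the subgraph just described, and then argue that any cycle in $G$ must use the deleted edge out of $i$, since $T$ itself is acyclic. Because the functional property forces every cycle in $G$ to be disjoint from any other cycle, and the deleted edge belongs to at most one cycle, $G$ contains at most one cycle; on the other hand, tracing outgoing edges from $i$ in $G$ eventually returns to $i$ (since every vertex has an out-edge and $G$ is finite), producing a cycle through $i$. Hence $G$ has exactly one cycle, and it passes through $i$.

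For the converse, assume $G$ has a unique cycle $C$ and $i \in C$. Let $T$ be the subgraph of $G$ with $i$'s outgoing edge removed, so $T$ has $n-1$ edges, each $j \neq i$ has a single outgoing edge, and $i$ has none. From any vertex $j$, following outgoing edges in $G$ must eventually enter the unique cycle $C$ and therefore reach $i$; the same trajectory in $T$ reaches $i$ and halts there. So every vertex has a directed path to $i$ in $T$. Moreover, $T$ is acyclic: any cycle in $T$ would be a cycle of $G$ avoiding $i$'s outgoing edge, contradicting either the uniqueness of $C$ or the fact that $C$, passing through $i$, contains that very edge. A subgraph on $n$ vertices with $n-1$ edges that is acyclic and where every vertex has a directed path to $i$ is precisely an $i$-tree.

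The main obstacle, which is actually quite mild, is the careful argument that the unique cycle of $G$ necessarily contains $i$'s outgoing edge: this is where the hypothesis $i \in C$ is combined with the functional property that $i$ has exactly one outgoing edge, forcing $C$ to traverse that specific edge and hence to be destroyed by its deletion. All other steps are essentially bookkeeping with the functional-graph structure.
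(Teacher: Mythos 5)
Your proof is correct and takes essentially the same route as the paper's: the forward direction rests on the observations that the $i$-tree forces every cycle of the functional graph to pass through $i$ and that cycles of a functional graph are pairwise disjoint, while the converse deletes the out-edge of $i$ on the unique cycle and verifies that what remains is an $i$-tree. The only cosmetic point is that ``tracing out-edges from $i$ eventually returns to $i$'' is justified only once you know every cycle contains $i$, which you have in fact already established at that point.
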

\begin{proof}
If a functional graph contains an $i$-tree then every cycle in this graph contains $i$.
However, the cycles of a functional graph do not intersect, hence this cycle is unique.

Conversely, if a functional graph contains a unique cycle, and this cycle goes through $i$, then deleting
the edge issuing from $i$ on that cycle we obtain a graph that does not contain any cycles and has a unique sink, 
hence this is an $i$-tree.
\end{proof}

\begin{corollary}
\label{c:equiv}
A functional graph contains an $i$-tree and the edge $(i,j)$ if and only if it has only one
cycle, and this cycle contains $(i,j)$.
\end{corollary}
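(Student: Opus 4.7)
The plan is to deduce Corollary \ref{c:equiv} as an immediate specialization of Lemma \ref{l:equiv}, with one small extra ingredient: in a functional graph every vertex has exactly one outgoing edge, so the edge $(i,j)$ belongs to the graph if and only if $(i,j)$ is the unique edge leaving $i$. In particular, any cycle of a functional graph that passes through $i$ is forced to use this unique outgoing edge.

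For the forward direction, I would assume that the functional graph contains an $i$-tree together with the edge $(i,j)$. Lemma \ref{l:equiv} already tells us that the graph has exactly one cycle and that this cycle passes through the node $i$. By the observation above, the cycle must traverse the unique outgoing edge from $i$, which by hypothesis is $(i,j)$; hence $(i,j)$ lies on the cycle.

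Conversely, suppose the functional graph has only one cycle and that this cycle contains the edge $(i,j)$. Then the cycle passes through $i$, so Lemma \ref{l:equiv} produces an $i$-tree inside the graph; the edge $(i,j)$ is present by hypothesis. I do not expect any genuine obstacle here: the corollary adds nothing beyond the remark that the cycle guaranteed by Lemma \ref{l:equiv} is uniquely determined at $i$ by the outgoing-edge constraint, so the substantive work has already been carried out in the lemma itself.
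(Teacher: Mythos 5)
Your proof is correct and matches the intended derivation: the paper states Corollary \ref{c:equiv} without proof as an immediate consequence of Lemma \ref{l:equiv}, and your added observation that the unique outgoing edge of $i$ in a functional graph forces the cycle through $i$ to use $(i,j)$ is exactly the missing step. Nothing further is needed.
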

}\fi

Let $T$ be a subgraph of $D(A)$. Define its {\em weight} $\pi(T)$ as the product 
 of the weights of the edges in $T$. 
We will use this definition only in the cases when $T$ is a directed spanning tree or a unicyclic functional 
graph. By the total weight of a set of graphs (for example, the set of all $i$-trees or all $i$-unicyclic
functional graphs) we mean the sum of the weights of all graphs in the set.

We now present a semiring version of the Markov Chain Tree Theorem. This proof
is a semiring extension of the proof in Fenner-Westerdale~\cite{FW}. See also Fre\u{\i}dlin-Wentzell~\cite{FW84}~Lemma~3.2 
and Sonin~\cite{Son99}, Lemma 6.  

We denote the set of all $i$-trees in $D(A)$ by $\mathcal{T}_i$.  The 
Rooted Spanning Tree (RST) vector $w \in S^n$ is defined by 

\begin{equation}
\label{eq:wsemi}
w_i = \sum_{T \in \mathcal{T}_i} \pi(T), i=1, \ldots, n. 
\end{equation}

In general, the set $\mathcal{T}_i$ may be empty and then $w_i=0$.
In the usual algebra and in the max algebra, $w$ is positive when $A$ is 
irreducible. %and in general $w$ may be zero.

A matrix $A \in S^{n \times n}$ is said to be {\em stochastic} 
if $a_{i1} + a_{i2} + \cdots + a_{in}=1$ 
for $1 \leq i \leq n$.

{\bf Markov Chain Tree Theorem in Semirings}

\begin{theorem}
\label{t:mst}
Let $A \in S^{n \times n}$ and 
let $w$ be defined by (\ref{eq:wsemi}).  Then for each $i=1, \ldots, n,$ we have 
\begin{equation}
\label{e:balance}
w_i\cdot\sum_{j\neq i} a_{ij}=\sum_{j\neq i} w_j a_{ji}. 
\end{equation}
If $A$ is stochastic then
\begin{equation}
\label{e:mct}
A^T \cdot w = w.
\end{equation}
\end{theorem}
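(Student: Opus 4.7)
The plan is to prove (\ref{e:balance}) by showing that both sides count, with weights, exactly the same objects: the $i$-unicyclic functional subgraphs of $D(A)$. Equation (\ref{e:mct}) will then follow quickly from stochasticity. Since $S$ is only a commutative semiring, there is no subtraction to rely on, so the argument must be strictly combinatorial, carried out via weight-preserving bijections in the spirit of Fenner--Westerdale.

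First I would expand the left-hand side of (\ref{e:balance}) by distributivity as
\begin{equation*}
w_i\cdot\sum_{j\neq i}a_{ij}=\sum_{j\neq i}\sum_{T\in\mathcal{T}_i}\pi(T)\,a_{ij}.
\end{equation*}
Given any $i$-tree $T$ and any $j\neq i$, adjoining the edge $(i,j)$ to $T$ supplies the previously sinkless vertex $i$ with its unique outgoing edge, producing a functional graph whose only cycle passes through $i$ via $(i,j)$; hence it is $i$-unicyclic, and its weight equals $\pi(T)\,a_{ij}$. I would then argue that the map $(T,j)\mapsto T\cup\{(i,j)\}$ is a bijection onto the set of all $i$-unicyclic functional graphs, by exhibiting the inverse: given such a graph $G$, there is a unique edge of its cycle issuing from $i$, and removing it yields an $i$-tree.

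Next I would carry out a symmetric decomposition on the right-hand side. For each $j\neq i$ and each $j$-tree $T'$, the graph $T'\cup\{(j,i)\}$ is functional (the former root $j$ now has out-degree one), and its unique cycle is formed by closing the path from $i$ to $j$ in $T'$ with the new edge $(j,i)$; this cycle therefore passes through $i$, making the graph $i$-unicyclic. Again $(T',j)\mapsto T'\cup\{(j,i)\}$ is weight-preserving and bijective onto the same collection: from an $i$-unicyclic functional graph one recovers $j$ as the unique predecessor of $i$ on the cycle, and removing $(j,i)$ produces a $j$-tree. Both sides of (\ref{e:balance}) thus equal the total weight of $i$-unicyclic functional subgraphs of $D(A)$.

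To obtain (\ref{e:mct}), I would use stochasticity and distributivity to write
\begin{equation*}
w_i=w_i\cdot\Bigl(a_{ii}+\sum_{j\neq i}a_{ij}\Bigr)=w_i a_{ii}+w_i\sum_{j\neq i}a_{ij},
\end{equation*}
substitute (\ref{e:balance}) into the last term, and recognize $w_i a_{ii}+\sum_{j\neq i}w_j a_{ji}=\sum_j w_j a_{ji}=(A^T w)_i$. The main delicate point — where I would be most careful — is the verification of the two bijections in a manner that never uses cancellation: one must confirm that each $i$-unicyclic functional graph arises exactly once from each construction, that the restriction $j\neq i$ correctly excludes loops at $i$, and that uniqueness of the cycle through $i$ in a functional graph follows from the fact that distinct cycles in such graphs cannot share a vertex.
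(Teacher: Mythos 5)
Your proposal is correct and follows essentially the same route as the paper's own proof: both sides of \eqref{e:balance} are identified with the total weight of the $i$-unicyclic functional subgraphs via the two weight-preserving bijections $(T,j)\mapsto T\cup\{(i,j)\}$ and $(T',j)\mapsto T'\cup\{(j,i)\}$, and \eqref{e:mct} is then obtained by absorbing the $w_i a_{ii}$ term using stochasticity. The delicate points you flag (uniqueness of the cycle, disjointness of cycles in a functional graph, exclusion of loops at $i$) are exactly the ones the paper's argument relies on.
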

\begin{proof}
To prove~\eqref{e:balance} we will argue that both parts are equal to the total weight of
all $i$-unicyclic functional digraphs, which we further denote by $\pi[i]$.  

On the one hand, every combination of an $i$-tree and an edge $(i,j)$ with $j\neq i$ results in an $i$-unicyclic 
functional digraph.  Indeed, the resulting digraph is clearly functional; moreover, every cycle in it has to contain the edge $(i,j)$,
so there is only one cycle. Hence, using the distributivity, 
the left hand side of~\eqref{e:balance} can be represented as sum of weights of
some $i$-unicyclic functional digraphs.
As each $i$-unicyclic functional digraph is uniquely determined by an $i$-tree and an 
edge $(i,j)$ where $j\neq i$,  the above mentioned sum contains all weights 
of such digraphs, with no repetitions.
Thus the left hand side of~\eqref{e:balance} is
equal to $\pi[i]$.

On the other hand, every combination of a $j$-tree and an edge $(j,i)$ with $j\neq i$ also results in an
$i$-unicyclic functional digraph (since every cycle in the 
resulting functional graph has to contain the edge $(j,i)$).
Hence, using the distributivity, 
the right hand side of~\eqref{e:balance} 
can be also represented as sum of weights of
some $i$-unicyclic functional digraphs.
If we take an $i$-unicyclic functional graph then $i$ 
may have several incoming edges, but only one of them belongs to the 
(unique) cycle. Hence there is only one $j$ such that there 
is an edge $(j,i)$ and a path from $i$ to $j$ so that a $j$-tree 
exists. Thus an $i$-unicyclic functional digraph is uniquely 
determined by a $j$-tree and an 
edge $(j,i)$ where $j\neq i$, and the right hand side of~\eqref{e:balance} is
also equal to $\pi[i]$.

Equation~\eqref{e:mct} results from adding $w_i a_{ii}$ to both sides
of~\eqref{e:balance} for each $i$, and using the stochasticity of $A$.
\end{proof}

\begin{example}\label{boo_eg}{\rm{Consider the Boolean algebra over the two-element set $U=\{\sigma_1, \sigma_2 \}.$  
Observe that the $3 \times 3$ matrix $A_1=\left[ \begin{array}{ccc} 1 & \sigma_1 & 0\\ \sigma_1 & 1 & \sigma_2 \\ 0&\sigma_2&1\end{array} \right]$ is stochastic. Referring to (\ref{eq:wsemi}), it is readily determined that the rooted spanning tree vector for $A_1$ is the zero vector. 

On the other hand, for the stochastic matrix $A_2=\left[ \begin{array}{ccc} 1 & 1 & 0\\ \sigma_1 & 1 & \sigma_2 \\ 0&\sigma_2&1\end{array} \right],$ we find that the rooted spanning  tree vector is $\left[\begin{array}{ccc} 0&\sigma_2 &\sigma_2 \end{array}\right].$ We note in passing that for the matrix $A_2,$ the techniques of \cite{KP} can be used to show that the vectors $\left[\begin{array}{ccc} 1&1 &\sigma_2 \end{array}\right]$ and $\left[\begin{array}{ccc} 0&\sigma_2 &1 \end{array}\right]$ form a basis for the left eigenspace of $A_2$ corresponding to the eigenvalue $1$.
}}
\end{example}

\section{State reduction algorithm in semifields}
\label{s:sr}
In this section, we describe an algorithm for computing the spanning tree vector $w$ 
in anti-negative semifields.  We first recall some necessary definitions.
A semiring $(S, +, \cdot)$ is called a semifield if every nonzero 
element of $S$ has a multiplicative inverse. The semirings 
in examples \ref{ex:na} and \ref{ex:ma} are
commutative semifields.

A semifield $S$ is antinegative if $a + b = 0$ implies that $a = b = 0$ for $a, b \in S$. 

Algorithm \ref{a:usr} below provides a universal version of the state reduction algorithm.
Following~\cite{LRSS} we describe this in a language derived from MATLAB. The basic arithmetic 
operations here are $a+b$, $ab$ and $inv(a):=a^{-1}$. For simplicity, we avoid making too much use of MATLAB
vectorisation here. However, we exploit the functions ``sum'' and, respectively, 
``prod'', which sum up and, respectively,
take product of all the entries of a given vector.

\begin{algorithm}
\label{a:usr}
State reduction algorithm for anti-negative semifields.
\end{algorithm}

\sssn {\bf Input:} An $n\times n$ matrix $A$ with entries $a(i,j)$ and 
at least one non-zero off-diagonal entry in each row,\\
$A$ is also used to store intermediate results of the computation process.

\msn {\em Phase 1: State Reduction}

\msn {\bf for} $i=1: n-1$\\
$s(i)=sum(a(i,i+1: n))$\\
{\bf for} $k=i+1: n$\\
{\bf for} $l=i+1: n$\\
$a(k,l)=a(k,l)
+ a(k,i)\cdot a(i,l)\cdot inv(s(i))$\\
{\bf end}\\
{\bf end}\\
{\bf end}\\

\msn {\em Phase 2: Backward Substitution}

\msn $w(n)=prod(s(1: n-1))$\\
$w(1: n-1)=0$\\
{\bf for} $i=n-1: -1: 1$\\
{\bf for} $k=i+1:n$\\
$w(i)= w(i)+w(k)\cdot a(k,i)\cdot inv(s(i))$\\
{\bf end}\\
{\bf end}

%{\bf for} $k=i+1: n$\\
%{\bf for} $l=i+1: n$\\
%$a(k,l) = a(k,l)\+ a(k,j)\cdot v(j)\cdot a(l,j)$\\
%{\bf end}\\
%{\bf end}\\
%$a(j+1: n,j)= a(j+1: n,j)\odot v(j)$\\
%{\bf end}

In order for the algorithm to work, it is necessary to ensure 
that the elements $s_i$ are non-zero at each step.  To this end, we assume that the 
matrix $A$ has at least 1 non-zero off-diagonal element in each row.  Formally, 
for $1 \leq i \leq n$, there exists some $j \neq i$ such that $a_{ij} \neq 0$.  A simple 
induction using the next lemma then shows that $s_i$ will 
be non-zero at each stage of Algorithm~\ref{a:usr}, Phase 1.

\begin{lemma}\label{lem:red1}
Let $A \in S^{n \times n}$ have at least one non-zero off-diagonal element in each row.  Let $s = \sum_{j=2}^{n} a_{1j}$ and define $\hat{A} \in S^{n \times n}$ as follows:
\begin{itemize}
\item[(i)] $\hat{a}_{ij} = a_{ij} + s^{-1} a_{i1}a_{1j}$ for $i, j \geq 2$;
\item[(ii)] $\hat{a}_{ij} = a_{ij}$ otherwise.
\end{itemize}
Then for $2 \leq i \leq n$, there is some $j \geq 2$, $j \neq i$ with $\hat{a}_{ij} \neq 0$.
\end{lemma}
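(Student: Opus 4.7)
My plan is to fix $i\in\{2,\dots,n\}$ and produce a column $j\in\{2,\dots,n\}\setminus\{i\}$ with $\hat{a}_{ij}\neq 0$. The argument rests on two elementary consequences of working in an antinegative semifield: (a) if any summand of a finite sum is nonzero then so is the sum, since a vanishing sum would force every term to vanish by antinegativity; and (b) the nonzero elements form a multiplicative group, so a product of nonzero factors stays nonzero. Applying (a) to row~1---which by hypothesis has some off-diagonal nonzero entry at a column $k\geq 2$---shows in particular that $s=\sum_{j=2}^{n}a_{1j}$ is itself nonzero, so $s^{-1}$ exists and is nonzero.

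The core of the proof is a short case split on $a_{i1}$. If $a_{i1}=0$, the reduction formula collapses to $\hat{a}_{ij}=a_{ij}$ for every $j\geq 2$; the entry $a_{ij^*}$ supplied by the row-$i$ hypothesis cannot sit in column~1 (since $a_{i1}=0$), so $j^*\in\{2,\dots,n\}\setminus\{i\}$ and we take $j=j^*$. If $a_{i1}\neq 0$, I would try to pick an index $k\geq 2$, $k\neq i$, with $a_{1k}\neq 0$: the term $s^{-1}a_{i1}a_{1k}$ is then a product of nonzero semifield elements, hence nonzero by (b), and antinegativity applied to the identity $\hat{a}_{ik}=a_{ik}+s^{-1}a_{i1}a_{1k}$ forces $\hat{a}_{ik}\neq 0$, so $j=k$ works.

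The step I expect to be the main obstacle is securing such a $k$ in the second case. If row~1's only off-diagonal nonzero entry happens to be $a_{1i}$ itself, no $k\neq i$ is immediately available, and the reduction degenerates to $\hat{a}_{ij}=a_{ij}$ for every $j\geq 2,\,j\neq i$; one is then thrown back onto the row-$i$ hypothesis and has to verify that it supplies a suitable $j^*\geq 2$. Handling this borderline sub-case---using the hypotheses on rows 1 and $i$ jointly together with antinegativity and the semifield structure---is the delicate point in the proof; the remaining situations follow cleanly from the routine manipulations of (a) and (b) sketched above.
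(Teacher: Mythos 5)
Your two cases and the two supporting facts (a sum with a nonzero summand is nonzero, by antinegativity; a product of nonzero elements is nonzero in a semifield) are exactly the ingredients of the paper's proof, which runs: if the nonzero off-diagonal entry $a_{ij^*}$ of row $i$ has $j^*\ge 2$, then $\hat a_{ij^*}\ne 0$ by antinegativity; otherwise $a_{i1}\ne 0$, and one takes $j$ with $a_{1j}\ne 0$ so that $s^{-1}a_{i1}a_{1j}\ne 0$. The only difference is that you explicitly flag the sub-case in which the second branch fails to produce an index $j\ne i$, whereas the paper passes over it in silence.

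That sub-case is not merely delicate: it cannot be handled, because the lemma as stated fails there. Take $n=3$, $i=2$ over the nonnegative reals with $a_{12}=a_{21}=a_{31}=1$ and all other off-diagonal entries $0$. Every row has a nonzero off-diagonal entry and $s=a_{12}+a_{13}=1$, yet
\[
\hat a_{23}=a_{23}+s^{-1}a_{21}a_{13}=0+1\cdot 1\cdot 0=0,
\]
and $j=3$ is the only index with $j\ge 2$, $j\ne 2$. So your proposal is incomplete at precisely the point where the paper's own proof is incomplete (the paper picks $j$ with $a_{1j}\ne 0$ without verifying $j\ne i$), and no further use of antinegativity will close the gap without a stronger hypothesis. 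The natural repair is to assume $D(A)$ strongly connected: by antinegativity and the semifield structure, $\hat a_{kl}\ne 0$ if and only if $a_{kl}\ne 0$ or both $a_{k1}\ne 0$ and $a_{1l}\ne 0$, so the reduced digraph on $\{2,\dots,n\}$ is obtained by shortcutting node $1$ and remains strongly connected, which yields the required off-diagonal entry whenever $n\ge 3$. Weakening the conclusion to allow $j=i$ would also make the statement provable by your argument, but it would no longer support the induction that keeps each $s_i$ invertible in Phase~1 of the algorithm.
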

\begin{proof} Let $ i \geq 2$ be given.  By assumption, there is some $j \neq i$ with $a_{ij} \neq 0$.  If $j \geq 2$, (i) combined with the antinegativity of $S$ implies that $\hat{a}_{ij} \neq 0$.  If not, then it follows that $a_{i1} \neq 0$ and again by assumption there is some $j$ with $a_{1j} \neq 0$.  As $S$ is antinegative, it is immediate from (i) that $\hat{a}_{ij} \neq 0$.  
\end{proof}

\begin{remark}
Phase 1 is, in fact, similar to the universal LDM decomposition described in \cite{LRSS}, with algebraic inversion operations instead of algebraic closure (Kleene star).  
\end{remark}

\medskip Algorithm \ref{a:usr} requires $\frac{n^3}{3}+O(n^2)$ 
operations of addition, $\frac{2n^3}{3}+O(n^2)$ operations of 
multiplication and $n-1$ operations of taking inverse.
The operation performed in Phase 1 can be seen as a {\bf state reduction}, where a 
selected
state of the network is suppressed, while the weights 
of the edges not using that state are modified. Recall that in the usual arithmetic
and if $A$ is stochastic, the weights of edges are transition probabilities.

For instance, on 
the first step of Phase 1 
we suppress state $1$ and obtain 
a network with weights
\begin{equation}
\label{e:rm1}
a^{(1)}_{kl}=a_{kl}+ \frac{a_{k1}a_{1l}}{s_1},\quad k,l>1.
\end{equation}
%In particular, it can be shown that the $(n-1)\times(n-1)$ 
%transition matrix $A^{(1)}=(a_{ij}^{(1)})$ is stochastic. 
We inductively define
\begin{equation}
\label{e:rmi}
a^{(i)}_{kl}=a^{(i-1)}_{kl}+ \frac{a^{(i-1)}_{ki}a^{(i-1)}_{il}}{s_i},\quad k,l>i.
\end{equation}
for $i=1,\ldots,n-1$. 
%It can be shown (by a straightforward induction) that the matrix 
%$A^{(i)}=(a_{kl}^{(i)})$ is stochastic. 
So $A^{(i)}=a_{kl}^{(i)}$ is the matrix of the reduced network
obtained on the $i$th step of Phase 1, by forgetting the states $1,\ldots,i$.

Denote by $w^{(i)}$ the spanning tree vector of the 
$i$th reduced Markov model (with $n-i$ states). 
This vector
has components $w^{(i)}_{i+1},\ldots,w^{(i)}_n$. We will further 
use the following nontrivial 
statement, whose proof (following Sonin~\cite{Son99}) will be 
recalled below in Section~\ref{s:sonin}.

\begin{lemma}
\label{l:sonin}
For all $i<k$ we have $s_i\cdot w_k^{(i)}=w_k^{(i-1)}$.
\end{lemma}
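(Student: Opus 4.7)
The plan is to establish the identity by matching $s_i w_k^{(i)}$ and $w_k^{(i-1)}$ as weighted sums of spanning-tree structures in the antinegative semifield. First, I would expand $w_k^{(i)} = \sum_{T \in \cT_k^{(i)}} \prod_{(p,q) \in T} a^{(i)}_{pq}$ using the definition $a^{(i)}_{pq} = a^{(i-1)}_{pq} + a^{(i-1)}_{pi} a^{(i-1)}_{iq}/s_i$ and distributivity, writing each $\pi^{(i)}(T)$ as a sum indexed by the subset $S \subseteq E(T)$ of edges \emph{routed through} the eliminated node $i$. After multiplication by $s_i$, each $(T,S)$-term carries the prefactor $s_i^{1-|S|}$ times a monomial in the entries of $A^{(i-1)}$.

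Next, I would group the right-hand side $w_k^{(i-1)} = \sum_{T' \in \cT_k^{(i-1)}} \pi^{(i-1)}(T')$ by the number $m$ of incoming edges at node $i$ in $T'$, and aim to show that the combined contribution of all $(T,S)$-pairs with $|S| = m$ equals the sum of $\pi^{(i-1)}(T')$ over the $T'$'s with exactly $m$ in-neighbors of $i$. The construction is: given such a $T'$ with in-neighbors $p_1, \ldots, p_m$ and outgoing edge $(i, j^*)$, deleting the star at $i$ leaves a forest on $\{i+1, \ldots, n\}$ whose main component contains $k$; the corresponding $T$'s on the left arise by reattaching each $p_k$ to some target $q_k$ lying in a component distinct from $p_k$'s. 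The cases $m = 0$ and $m = 1$ go through transparently: for $m = 0$, the factor $s_i = \sum_{j > i} a^{(i-1)}_{ij}$ supplies the outgoing edge of $i$ in $T'$; for $m = 1$, the prefactor $s_i^{1-|S|} = 1$ makes the assignment an immediate bijection.

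The main obstacle is the case $m \geq 2$: any single $(T,S)$-pair naively produces $m$ outgoing edges from $i$, so no individual pair corresponds to a unique $T'$. This is resolved by summing over all admissible reconnections: the key algebraic identity is that $\sum_{(q_1, \ldots, q_m)} \prod_k a^{(i-1)}_{i, q_k}$, summed over tuples that yield a valid $k$-tree on $\{i+1, \ldots, n\}$, factors as $s_i^{m-1} \cdot \bigl(\sum_{j^* \in \mathrm{main}} a^{(i-1)}_{i, j^*}\bigr)$. Combined with the prefactor $s_i^{1-m}$, the powers of $s_i$ cancel and we recover $\sum_{j^*} \pi^{(i-1)}(T'_{j^*})$, i.e.\ the sum over trees $T'$ with the given forest structure and all admissible outgoings $(i, j^*)$. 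Since every manipulation uses only semifield operations together with inverses of the nonzero elements $s_i$, Sonin's probabilistic argument transfers verbatim to any commutative antinegative semifield; verifying the above summation identity is the technical content that Section~\ref{s:sonin} will carry out in detail.
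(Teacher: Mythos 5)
Your overall strategy coincides with the one the paper uses in Section~\ref{s:sonin}: reduce to a single elimination step, decompose each tree contributing to $w_k^{(i-1)}$ (resp.\ each term in the distributive expansion of $w_k^{(i)}$) according to the forest left after deleting the star at the eliminated node, observe that the two families of forests coincide, and thereby reduce the lemma to a ``reconnection'' identity. Your displayed claim that $\sum_{(q_1,\ldots,q_m)}\prod_k a^{(i-1)}_{i,q_k}$, over admissible tuples, equals $s_i^{m-1}\cdot\bigl(\sum_{j^*\in\mathrm{main}}a^{(i-1)}_{i,j^*}\bigr)$ is exactly identity~\eqref{e:minor}. The problem is that you do not prove this identity: you call it ``the key algebraic identity'' and then defer its verification to the very section whose content you are supposed to supply. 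That identity is the entire difficulty of the lemma, and it is not obvious. The sum on the left runs only over those reconnections that do not create a cycle (note that your stated condition --- each $p_k$ reattached to a component distinct from its own --- is necessary but not sufficient: two in-neighbours can be sent into each other's components, producing a cycle among components), and yet the answer is claimed to be the \emph{full} power $s_i^{m-1}$ times the sum over the main component only. For $m\ge 2$ nothing in your outline explains why excluding the cyclic reconnections still yields this clean factorization; moreover, in a general antinegative semifield one cannot argue by subtracting off the excluded terms.

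The paper closes precisely this gap with a semiring version of Cayley's tree enumerator formula, $(x_1+\cdots+x_n)^{n-2}\cdot x_n=\sum_{T}x_1^{\indeg(1,T)}\cdots x_n^{\indeg(n,T)}$ (equation~\eqref{e:comb2}, proved by a leaf-induction valid in any commutative semiring), applied with $x_l=\sum_{k\in T_l(F)}a_{1k}$ to the trees induced on $D\cup\{n\}$ by collapsing each component of the forest to its root. Admissible reconnections are classified by the induced tree they generate, and the multinomial identity~\eqref{e:binom} converts the sum over reconnections inducing a fixed tree into the product $\prod_l x_l^{\indeg(l,T)}$; summing over induced trees and invoking~\eqref{e:comb2} then produces $s_i^{m-1}$ times the sum over the main component. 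Some argument of this kind is indispensable; as it stands your proposal is a correct reduction of the lemma to~\eqref{e:minor}, together with a verification of the easy cases $m=0,1$, but not a proof.
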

 
Let us show (modulo this Lemma) that Algorithm~\ref{a:usr} actually works.

\begin{theorem}
\label{t:mainres}
Let $S$ be a commutative anti-negative semifield and $A\in S^{n\times n}$ 
be such that every row contains at least one nonzero
off-diagonal element. Then Algorithm~\ref{a:usr}
computes the spanning tree vector of $A$. If $A$ 
is stochastic then this vector is a left eigenvector of $A$.
%satisfying~(\ref{e:balance}).
\end{theorem}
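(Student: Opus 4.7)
The plan is to establish by downward induction on $i$ that the stored value $w(i)$ at the end of Phase~2 equals $w_i^{(0)}$, the $i$th component of the rooted spanning tree vector of the original matrix $A = A^{(0)}$. The engine of the induction will be Lemma~\ref{l:sonin}, which lets us translate between consecutive reductions $A^{(i-1)}$ and $A^{(i)}$; Theorem~\ref{t:mst} will then supply a balance identity on each reduced matrix that matches the inner-loop recursion of Phase~2 exactly.

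First I would dispose of well-definedness: Lemma~\ref{lem:red1}, iterated, shows that every $A^{(i-1)}$ inherits a nonzero off-diagonal element in each row, so by antinegativity $s_i \neq 0$ and its inverse exists at each step of Phase~1. I would also observe that, because iteration $i$ of Phase~1 only updates entries $a(k,l)$ with both $k,l > i$, the entry $a(k,i)$ for $k > i$ is left untouched from iteration $i$ onwards and therefore stores precisely $a^{(i-1)}_{k,i}$ when it is read in Phase~2. Iterating Lemma~\ref{l:sonin} yields $w_k^{(0)} = s_1 s_2 \cdots s_j w_k^{(j)}$ for all $j < k$. In particular, applied all the way down to the $1\times 1$ matrix $A^{(n-1)}$, whose only $n$-tree is the isolated vertex $n$ of weight $1$, this gives $w_n^{(0)} = s_1 \cdots s_{n-1}$, matching the initialization line of Phase~2 and providing the base case.

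For the inductive step, assume $w(k) = w_k^{(0)}$ for all $k > i$. Apply the balance identity~\eqref{e:balance} of Theorem~\ref{t:mst} to the matrix $A^{(i-1)}$ at its smallest index $i$, noting that $\sum_{k>i} a^{(i-1)}_{ik} = s_i$, to obtain $s_i \, w_i^{(i-1)} = \sum_{k > i} w_k^{(i-1)} \, a^{(i-1)}_{k,i}$. Multiply through by $s_1 \cdots s_{i-1} s_i^{-1}$ and absorb the $s_1 \cdots s_{i-1}$ factor into each $w_k^{(i-1)}$ via the iterated Lemma~\ref{l:sonin}; the result is $w_i^{(0)} = s_i^{-1} \sum_{k > i} w_k^{(0)} \, a^{(i-1)}_{k,i}$, which is exactly what the inner loop of Phase~2 accumulates in $w(i)$. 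The downward induction on $i$ therefore closes, and the eigenvector statement in the stochastic case is an immediate application of~\eqref{e:mct} in Theorem~\ref{t:mst}. The main obstacle is bundled entirely into Lemma~\ref{l:sonin}: once we grant that a single state-reduction step rescales each remaining coordinate of the spanning tree vector uniformly by $s_i$, the rest is bookkeeping combined with the balance identity, so the genuine technical effort is deferred to Section~\ref{s:sonin}.
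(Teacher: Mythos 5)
Your proof is correct and follows essentially the same route as the paper's: a downward induction through Phase~2 driven by Lemma~\ref{l:sonin} and the balance identity~\eqref{e:balance} of Theorem~\ref{t:mst} applied to each reduced matrix $A^{(i-1)}$. The only difference is bookkeeping --- you rescale everything back to $w^{(0)}$ via the iterated lemma, whereas the paper tracks the spanning tree vector of each reduced network under a modified initialization of $w(n)$ --- and your explicit remarks on well-definedness and on $a(k,i)$ storing $a^{(i-1)}_{k,i}$ are correct details the paper leaves implicit.
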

\begin{proof}
We will prove this theorem by induction, analyzing Phase 2 of Algorithm~\ref{a:usr}.

To begin, we show that initializing $w(n) = s_{n-1}$ and 
performing 1 step of Phase 2, $(w(n-1), w(n))$ is the spanning 
tree vector of the reduced matrix $A^{(n-2)}$ on the $2$ states $n-1, n$.  
It is easy to check that in this case, we obtain $w(n-1) = a^{(n-2)}_{n, n-1}$. 
We also have $w(n) = s_{n-1} = a^{(n-2)}_{n-1, n}$ so that in this 
case, $(w(n-1), w(n))$ is indeed the spanning tree vector of $A^{(n-2)}$ as claimed.

%The basis of induction is that $w_n=1$ is the (trivial)
%spanning tree vector of the matrix $A^{(n-1)}=1$. 

For the inductive step, let us make the following assertion:
If we initialize $w(n)=s_{i+1}\cdot\ldots\cdot s_{n-1}$ instead 
of $w(n)=s_1\cdot\ldots\cdot s_{n-1}$
in the beginning of Phase 2, then the vector 
$w(i+1),\ldots, w(n)$ obtained on the $n-i-1$ step of Phase 2 is the 
spanning tree vector $w_{i+1}^{(i)},\ldots, w_n^{(i)}$
of the $i$th reduced network, with the states $1,\ldots,i$ suppressed. 

We have to show that with the above assertion, if we initialize
 $w(n)=s_i\cdot\ldots\cdot s_{n-1}$ then the vector $w(i),\ldots, w(n)$ obtained on the 
$n-i$ step of Phase 2 is the spanning tree vector
of the $i-1$ reduced network.   

Indeed, we have
\begin{equation}
w_{i+1}^{(i-1)}=s_i w_{i+1}^{(i)},\ldots, w_n^{i-1}=s_i w_n^{(i)},
\end{equation}
by Lemma~\ref{l:sonin}.  Combining this with the induction hypothesis 
and our choice of $w(n)$, we see that the 
components $w(i+1),\ldots,w(n)$ are indeed equal to 
the entries $w_{i+1}^{(i-1)}, \ldots, w_n^{(i-1)}$ of the spanning tree vector.
Next, observe that Algorithm~\ref{a:usr} computes $w(i)$ 
using $w(i+1), \ldots, w(n)$ via the balance equation:
\begin{equation}
\label{e:balance2}
s_iw(i)=\sum_{k>i} w_k^{(i-1)}a_{ki}^{(i-1)}.
\end{equation}
As $s_i$ is invertible, it now follows from Theorem~\ref{t:mst} that $w(i) = w_{i}^{(i-1)}$.  
\end{proof}

\section{Proof of Lemma~\ref{l:sonin}}
\label{s:sonin}
%\begin{lemma}
%\label{l:sonin}
%For all $i<k$ we have $s_i\cdot w_k^{(i)}=w_k^{(i-1)}$.
%\end{lemma}
This proof follows closely that given in Sonin~\cite{Son99}, Section 5. 
Our main reason for including it in full is to verify that it generalizes 
to an arbitrary antinegative semifield and to give, in our view,
a different and more transparent explanation of the initial proof.

We have to show that $s_i\cdot w_k^{(i)}=w_k^{(i-1)}$ for all $k>i$.
It is enough to consider the case when $i=1$ and $k>1$. For convenience, let us assume $k=n$, 
so we are to prove that $s_1w_n^{(1)}=w_n$. Recall that here $w_n$ is the total 
weight of all $n$-trees,
$s_1=\sum_{j>1} a_{1j}$, and $w_n^{(1)}$ is the total weight of 
all $n$-trees in the reduced Markov
model where the weight of any edge $(k,l)$ for $k,l>1$ equals
\begin{equation}
\label{e:akl1}
a_{kl}^{(1)}=a_{kl}+ \frac{a_{k1}a_{1l}}{s_1}.
\end{equation}

In every tree $T = (V(T), E(T))$ that contributes to $w_n$ we can identify 
the set $D$ of nodes $i$ such that $(i, 1) \in E(T)$ (the edge 
originating at $i$ terminates at $1$).
Further, each tree contributing to $w_n$ is uniquely determined by 
1) the set $D$, 2) the forest $F$ whose (directed) trees are rooted
at the nodes of $D\cup\{n\}$, and 3) the edge starting at node $1$ and 
ending at a node of the tree rooted at $n$.

In contrast to the case of $w_n$, $w_n^{(1)}$ (using the distributivity property of $S$)
can be written as a sum of terms, where each term is determined not only by an $n$-tree on the set $\{2,\ldots,n\}$, but also by the choice of
the first or the second term in~\eqref{e:akl1}, made for each edge of the tree.  For every such term we can identify the
set of nodes $\Tilde{D}$ such that for each edge starting at one of these nodes the second term in~\eqref{e:akl1}
is chosen.  Further, each term contributing to $w_n^{(1)}$ is 
uniquely determined by 1) the set $\Tilde{D}$, 2) the forest $\Tilde{F}$ whose
trees are rooted at the nodes of $\Tilde{D}\cup\{n\}$ and 3) by the 
mapping $\tau$ from $\Tilde{D}$ to $\{2,\ldots,n\}$
(which is, in general, neither surjective nor injective).

Given a forest $F$ on the set $D\cup\{n\}$ and $k\in D\cup\{n\}$, 
we denote by $T_k(F)$ the tree rooted at $k$.

In view of the above and making use of the distributivity property 
of $S$, the equation $w_n=s_1w_n^{(1)}$ is equivalent to the following:
\begin{equation}
\label{e:major}
\begin{split}
&\sum_{D,F}\left(
\prod_{l\in D} 
a_{l1}\cdot\prod_{(i,j)\in F} a_{ij} 
\cdot \sum_{k\in T_n(F)} 
a_{1k}\right)=\\
& s_1\cdot\sum_{\Tilde{D},\Tilde{F}} 
s_1^{-|\Tilde{D}|}
\cdot\left(\prod_{l\in\Tilde{D}} a_{l1}\cdot
\prod_{(i,j)\in F} a_{ij}\cdot\sum_{\tau\colon\Tilde{D}\to\{2,\ldots,n\}}
\prod_{k\in\Tilde{D}} a_{1\tau(k)}\right).
\end{split}
\end{equation}

As the set of all pairs $(D,F)$ and the set of all pairs $(\Tilde{D},\Tilde{F})$ are identical, we are left to prove
the following identity

\begin{equation} 
\label{e:minor}
s_1^{|D|-1}\cdot \sum_{k\in T_n(F)} a_{1k}=
\sum_{\tau\colon D\to\{2,\ldots,n\}}\prod_{k\in D} a_{1\tau(k)}\quad\forall D,F .  
\end{equation}

The proof of~\eqref{e:minor} makes use of the following well-known 
combinatorial identity,
whose derivation we will briefly explain, for the reader's convenience. 
Let $T$ be an $n$-tree on $\{1,\ldots,n\}$, and let $\cT$
be the set of all $n$-trees. 
For each node $k\in\{1,\ldots,n\}$, its {\em indegree} $\indeg(k,T)$ in $T$
is defined as the number of ingoing edges. Let $x_1,\ldots,x_n$ be arbitrary
scalars from $\semr$. We will use the following version of Cayley's tree enumerator formula:

\begin{equation}
\label{e:comb2}
(x_1 +\ldots + x_n)^{n-2}\cdot x_n=\sum_{T\in\cT} x_1^{\indeg(1,T)}\cdot\ldots\cdot x_n^{\indeg(n,T)}.
\end{equation}

Recall that this formula admits a classical proof which works in any commutative semiring. 
Indeed, observe that for each term on the right hand side of~\eqref{e:comb2}, there is 
at least one variable
among $x_1,\ldots,x_{n-1}$ which does not appear, since each tree has at least one leaf. 
The same is true about the left hand side. 
of~\eqref{e:comb2}, since any monomial in the expansion of $(x_1+\ldots+ x_n)^{n-2}$ has total degree $n-2$, which is
one less than $n-1$. Due to this observation, it suffices to prove

\begin{equation}
\label{e:comb21}
(x_2 +\ldots + x_n)^{n-2}\cdot x_n=
\sum_{T\in\cT\colon 1\,\text{is a leaf}} x_2^{\indeg(2,T)}\cdot\ldots\cdot x_n^{\indeg(n,T)}.
\end{equation}

Observe that by induction (whose basis for $n=2$ is trivial) we have
\begin{equation}
\label{e:comb22}
(x_2 +\ldots + x_n)^{n-3}\cdot x_n=
\sum_{T\in\cT'} x_2^{\indeg(2,T)}\cdot\ldots\cdot x_n^{\indeg(n,T)},
\end{equation}
where $\cT'$ is the set of all (directed) $n$-trees on nodes $2,\ldots,n$.
Multiplying both parts of~\eqref{e:comb22} by $(x_2 +\ldots + x_n)$ and using the identity
\begin{equation}
\label{e:comb23}
\begin{split}
&\sum_{T\in\cT\colon 1\,\text{is a leaf}} x_2^{\indeg(2,T)}\cdot\ldots\cdot x_n^{\indeg(n,T)}=\\
&(x_2+\ldots +x_n)\cdot \sum_{T\in\cT'} x_2^{\indeg(2,T)}\cdot\ldots\cdot x_n^{\indeg(n,T)},
\end{split}
\end{equation}
which is due to the bijective correspondence between
 the trees in $\cT$ having node $1$ as a leaf and
the combinations of trees in $\cT'$ and edges issuing from node $1$, we obtain~\eqref{e:comb21} and
hence~\eqref{e:comb2}.

To apply~\eqref{e:comb2}, observe first that each mapping $\tau$ in~\eqref{e:minor} defines a mapping on $D\cup\{n\}$:
we put an edge $(u,v)$ for $u,v\in D\cup\{n\}$ if $\tau(u)$ belongs to the tree rooted at $v$. Further, 
this mapping defines a directed tree on $D\cup\{n\}$, rooted at $n$. In particular, observe that 
any cycle induced by $\tau$ would yield a cycle in the original graph
(which is a spanning tree on the nodes $2,\ldots, n$ rooted at $n$). Also, none of the nodes except for
$n$ can be a root since $\tau$ is defined for all nodes of $D$. We will refer to 
such a tree on $D\cup\{n\}$ as a $\tau$-induced tree, or just induced tree if
the mapping is not specified.

For any pair $(D,F)$ and for any $n$-tree $T$ on $D\cup\{n\}$ we can find a 
mapping $\tau\colon D\to\{2,\ldots,n\}$ which yields $T$ as a $\tau$-induced tree. 
Thus for any given pair $(D,F)$, the set of all 
possible induced trees (with all possible $\tau$), 
coincides with the set of all $n$-trees
on $D\cup\{n\}$. This set will be further denoted by $\cT_{induced}$. 

Let us set $x_l=\sum_{k\in T_l(F)} a_{1k}$ for all $l\in D\cup\{n\}$. 
Applying~\eqref{e:comb2} to the set of all reduced trees, with these $x_l$, 
a fixed pair $D,F$, and $|D|+1$ instead of $n$, we have

\begin{equation}
\label{e:minor2}
s_1^{|D|-1}\sum_{k\in T_n(F)} a_{1k}=\sum_{T\in\cT_{induced}}\left(\prod_{l\in D\cup\{n\}}\left(
\sum_{k\in T_l(F)} a_{1k}\right)^{\indeg(l,T)}\right)\quad \forall D,F.
\end{equation}

We are left to show that the right-hand sides 
of~\eqref{e:minor} and~\eqref{e:minor2} coincide.

For an induced tree $T$,
let $\tau\colon D\toT\{2,\ldots n\}$ denote the fact that $T$ is $\tau$-induced.
For each $l\in D\cup\{n\}$ let $\inset(l,T)$ denote the set of in-neighbours of $l$.
Consider the following chain of equalities, with $D$ and $F$ fixed.
\begin{equation}
\label{e:chain}
\begin{split}
& \sum_{\tau\colon D\to\{2,\ldots,n\}} 
\left(\prod_{k\in D} a_{1\tau(k)}\right) =
\sum_{T\in\cT_{induced}}\left(\sum_{\tau\colon D\toT\{2,\ldots,n\}}
\left(\prod_{k\in D} a_{1\tau(k)}\right)\right)\\
&=\sum_{T\in\cT_{induced}}\prod_{l\in D\cup\{n\}}\left(\sum_{\sigma\colon\inset(l,T)\to T_l(F)}
\prod_{s\in\inset(l,T)} a_{1,\sigma(s)}\right)\\
&=\sum_{T\in\cT_{induced}}\prod_{l\in D\cup\{n\}}\left(\sum_{k\in T_l(F)} a_{1k}\right)^{\indeg(l,T)}.
\end{split}
\end{equation}

These equalities can be explained as follows.
On the first step, we classify mappings $\tau$ according to the induced 
trees that they yield. 
On the next step, $D$ is represented as a union over all sets $\inset(l,T)$ where $l\in D\cup\{n\}$, and
we use the fact that each $\tau\colon D\toT\{2,\ldots n\}$  can be decomposed into 
a set  of some ``partial'' mappings $\sigma\colon\inset(l,T)\to T_l(F)$, 
and vice versa; every combination of such ``partial'' mappings 
gives rise to a mapping $\tau$ that yields $T$ (as a $\tau$-induced tree). 
On the last step we use the multinomial semiring identity
\begin{equation}
\label{e:binom}
\left(\sum_{k\in T_l(F)} a_{1k}\right)^{\indeg(l,T)}=\sum_{\sigma\colon\inset(l,T)\to T_l(F)}
\left(\prod_{s\in\inset(l,T)} a_{1,\sigma(s)}\right).
\end{equation}
To understand this identity observe that the left hand side of~\eqref{e:binom}
is a product of $\indeg(l,T) = |\inset(l,T)|$ identical sums of $|T_l(F)|$ terms.
By distributivity, this product can be written as a sum of monomials, where 
each monomial corresponds to a combination of choices made in each
bracket, and hence to a mapping $\sigma\colon\inset(l,T)\to T_l(F)$.
 
Finally, by~\eqref{e:chain} the right-hand sides of~\eqref{e:minor} and~\eqref{e:minor2} are equal, and this
completes the proof.

\end{document}